\newtheorem{theorem}{Theorem}[section]
\theoremstyle{plain}
\newtheorem{corollary}[theorem]{Corollary}
\newtheorem{example}{Example}[section]
\newtheorem{remark}[theorem]{Remark}
\numberwithin{equation}{section}
\begin{document}
\title[\textbf{generalizations on the univalence of an integral operator}]{%
\textbf{Some generalizations} \textbf{on the univalence of an integral
operator and quasiconformal extensions}}
\author{Murat \c{C}A\u{G}LAR}
\address{Department of Mathematics, Faculty of Science, Ataturk University,
Erzurum, 25240, Turkey.}
\email{mcaglar@atauni.edu.tr}
\author{Halit ORHAN}
\address{Department of Mathematics, Faculty of Science, Ataturk University,
Erzurum, 25240, Turkey.}
\email{E-mail: horhan@atauni.edu.tr}
\subjclass[2000]{30C45.}
\keywords{Loewner chain, univalence criterion, integral operator,
quasiconformal extension.}

\begin{abstract}
By using the method of Loewner chains, we establish some sufficient
conditions for the analyticity and univalency of functions defined by an
integral operator. Also, we refine the result to a quasiconformal extension
criterion with the help of Beckers's method.
\end{abstract}

\maketitle

\section{Introduction}

Let $\mathcal{A}$ the class of functions $f$ which are analytic in the open
unit disk $\mathcal{U=}\left\{ z\in 
%TCIMACRO{\U{2102} }%
%BeginExpansion
\mathbb{C}
%EndExpansion
:\left\vert z\right\vert <1\right\} $ with $f(0)=f^{\prime }(0)-1=0.$ We
denote by $\mathcal{U}_{r}$ the open disk $\left\{ z\in {\mathbb{C}}%
:\left\vert z\right\vert <r\right\} $ $,$ where $0<r\leq 1$, by $\mathcal{U}=%
\mathcal{U}_{1}$ the open unit disk of the complex plane and by $I$ the
interval $[0,\infty )$.

Let $k$ be constant in $[0,1).$ Then a homeomorphism $f$ of $G\subset 
\mathbb{C}$ is said to be $k-$\textit{quasiconformal, }if $\partial _{z}f$
and $\partial _{\overline{z}}f$ in the distributional sense are locally
integrable on $G$ and fulfill the inequality $\left\vert \partial _{%
\overline{z}}f\right\vert \leq k\left\vert \partial _{z}f\right\vert $
almost everywhere in $G.$ If we do not need to specify $k,$ we will simply
call that $f$ is \textit{quasiconformal.}

Three of the most important and known univalence criteria for analytic
functions defined in the open unit disk were obtained by Nehari \cite{Neh},
Ozaki-Nunokawa \cite{Oz-Nu} and Becker \cite{Bec}. Some extensions of these
three criteria were given by (see [\cite{Ove}, \cite{Radu1}, \cite{Radu3}, 
\cite{Radu4}, \cite{Radu5} and \cite{Tudor}]).During the time, a lot of
univalence criteria were obtained by different authors (see also \cite%
{Deniz1}, \cite{Deniz2} and \cite{Gol}).

In the present investigation, we will obtain a number of new criteria for
the functions defined by the integral operator $\mathcal{F}_{\beta }(z).$
Also, we obtain a refinement to a quasiconformal extension criterion of the
main result.

\section{Preliminaries}

Before proving our main theorem we need a brief summary of the method of
Loewner chains and quasiconformal extension criterion.

A function $\mathcal{L}(z,t):\mathcal{U}\times \lbrack 0,\infty )\rightarrow 
\mathbb{C}$ is said to be \textit{subordination chain} \textit{(or} \textit{%
Loewner chain)} if:

\begin{itemize}
\item[(i)] $\mathcal{L}(z,t)$ is analytic and univalent in $\mathcal{U}$ for
all $t\geq 0$.

\item[(ii)] $\mathcal{L}(z,t)\prec \mathcal{L}(z,s)$ for all $0\leq t\leq
s<\infty $, where the symbol $"\prec "$ stands for subordination.
\end{itemize}

In proving our results, we will need the following theorem due to Ch.
Pommerenke \cite{Pom}.

\begin{theorem}
\label{t1}\textbf{\ }\textit{Let }$\mathcal{L}%
(z,t)=a_{1}(t)z+a_{2}(t)z^{2}+...,\;a_{1}(t)\neq 0$\textit{\ be analytic in }%
$\mathcal{U}_{r}$\textit{\ for all }$t\in I,$\textit{\ locally absolutely
continuous in }$I,$\textit{\ and locally uniform with respect to }$\mathcal{U%
}_{r}$\textit{$.$ For almost all }$t\in I,$\textit{\ suppose that}%
\begin{equation}
z\frac{\partial \mathcal{L}(z,t)}{\partial z}=p(z,t)\frac{\partial \mathcal{L%
}(z,t)}{\partial t},\text{ }\forall z\in \mathcal{U}_{r}  \label{1.1}
\end{equation}%
\textit{where }$p(z,t)$\textit{\ is analytic in }$\mathcal{U}$\textit{\ and
satisfies the condition }$\Re p(z,t)>0$\textit{\ for all }$z\in \mathcal{U}%
,\;t\in I.$\textit{\ If }$\left\vert a_{1}(t)\right\vert \rightarrow \infty $%
\textit{\ for }$t\rightarrow \infty $\textit{\ and }$\{\mathcal{L}%
(z,t)\diagup a_{1}(t)\}$\textit{\ forms a normal family in }$\mathcal{U}%
_{r}, $\textit{\ then for each }$t\in I,$\textit{\ the function }$\mathcal{L}%
(z,t)$\textit{\ has an analytic and univalent extension to the whole disk }$%
\mathcal{U}.$
\end{theorem}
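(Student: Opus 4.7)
The plan is to combine the method of characteristics applied to the PDE (1.1) with the classical theory of the Loewner ordinary differential equation on the full disk. The key observation is that while $\mathcal{L}(\cdot,t)$ is a priori only analytic in $\mathcal{U}_r$, the hypothesis $\Re p(z,t)>0$ holds on all of $\mathcal{U}\times I$, so the characteristic ODE lives on $\mathcal{U}$ and can be used to transport values of $\mathcal{L}(\cdot,s)$ from $\mathcal{U}_r$ to the full disk.

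First I would exploit the characteristics. Along any curve $z(t)$ satisfying $dz/dt=-z/p(z,t)$, a direct computation using (1.1) shows that $t\mapsto\mathcal{L}(z(t),t)$ is constant. Fixing $s\in I$ and $z\in\mathcal{U}$, let $v(z,s,t)$ denote the unique solution of
\begin{equation*}
\frac{\partial v}{\partial t}=-\frac{v}{p(v,t)},\qquad v(z,s,s)=z,
\end{equation*}
for $t\ge s$. Since $\Re(1/p)>0$ on $\mathcal{U}\times I$, standard Loewner theory gives that $v(\cdot,s,t)$ is univalent in $\mathcal{U}$, maps $\mathcal{U}$ into itself, and satisfies $v(z,s,t)\to 0$ as $t\to\infty$; differentiating in $z$ at $z=0$ and comparing with (1.1) yields $a_1(s)=a_1(t)\,v'_z(0,s,t)$, so the hypothesis $|a_1(t)|\to\infty$ forces $v'_z(0,s,t)\to 0$. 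The invariance identity
\begin{equation*}
\mathcal{L}(z,s)=\mathcal{L}\bigl(v(z,s,t),t\bigr),\qquad z\in\mathcal{U}_r,\ t\ge s,
\end{equation*}
then extends $\mathcal{L}(\cdot,s)$ analytically to all of $\mathcal{U}$, because for every $z\in\mathcal{U}$ one has $v(z,s,t)\in\mathcal{U}_r$ for $t$ sufficiently large (uniformly on compacts), and the right-hand side is independent of the chosen $t$.

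The remaining step, which is the main obstacle, is to prove univalence of the extension on $\mathcal{U}$. Using normality I would extract a sequence $t_n\to\infty$ along which $h_n(w):=\mathcal{L}(w,t_n)/a_1(t_n)$ converges locally uniformly on $\mathcal{U}_r$ to a limit $g$ with $g(0)=0$ and $g'(0)=1$. Writing $\varepsilon_n:=v'_z(0,s,t_n)$ and $\phi_n(z):=v(z,s,t_n)/\varepsilon_n$, the functions $\phi_n$ are univalent on $\mathcal{U}$ with $\phi_n(0)=0$, $\phi_n'(0)=1$, hence form a normal family with univalent subsequential limits by Koebe and Hurwitz. Rewriting
\begin{equation*}
\frac{\mathcal{L}(z,s)}{a_1(s)}=\frac{h_n\bigl(\varepsilon_n\phi_n(z)\bigr)}{\varepsilon_n}=\phi_n(z)+\sum_{k\ge 2} b_{n,k}\,\varepsilon_n^{\,k-1}\phi_n(z)^k,
\end{equation*}
where $b_{n,k}$ are the Taylor coefficients of $h_n$, and combining Cauchy estimates on the $b_{n,k}$ (from normality of $\{h_n\}$) with Koebe distortion bounds on $\phi_n$, one sees that the tail sum vanishes locally uniformly on $\mathcal{U}$ as $n\to\infty$. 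The extension therefore coincides on $\mathcal{U}$ with a nonzero multiple of the univalent limit $\lim \phi_n$, and univalence follows. The delicate point is precisely this double limit: one must verify that $\varepsilon_n\to 0$ dominates the growth of the coefficients $b_{n,k}$ uniformly on compact subsets of $\mathcal{U}$.
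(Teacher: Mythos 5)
The paper offers no proof of this statement: it is quoted as a known theorem of Pommerenke (reference \cite{Pom}) and used as a black box, so there is no internal argument to compare yours against. Judged on its own, your proposal is essentially the classical Pommerenke--Becker proof and it is sound: the characteristic computation, the identity $a_{1}(s)=a_{1}(t)\,v_{z}'(0,s,t)$, the well-definedness of the extension via the semigroup property of the flow $v$, and the expansion $\mathcal{L}(z,s)/a_{1}(s)=\phi_{n}(z)+\sum_{k\geq 2}b_{n,k}\varepsilon_{n}^{k-1}\phi_{n}(z)^{k}$ are all correct. Two remarks. First, the claim that ``standard Loewner theory gives $v(z,s,t)\to 0$'' is misplaced in this unnormalized setting: with no normalization on $p$ the decay of $v$ is not automatic, and it is precisely the hypothesis $|a_{1}(t)|\to\infty$, via $v_{z}'(0,s,t)=a_{1}(s)/a_{1}(t)\to 0$ combined with the growth theorem $|v(z,s,t)|\leq |v_{z}'(0,s,t)|\,|z|/(1-|z|)^{2}$, that yields $v(\cdot,s,t)\to 0$ locally uniformly; since you derive the needed ingredient one sentence later, this is an ordering slip rather than a gap. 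Second, the ``delicate point'' you flag at the end is in fact routine: normality together with $h_{n}(0)=0$ gives local boundedness of $\{h_{n}\}$ on $\mathcal{U}_{r}$, hence $|b_{n,k}|\leq M(\rho)\rho^{-k}$ for $\rho<r$; with the Koebe bound $|\phi_{n}(z)|\leq R/(1-R)^{2}$ on $|z|\leq R$ the tail is dominated by a geometric series summing to $O(|\varepsilon_{n}|)$, so it vanishes locally uniformly and Hurwitz finishes the univalence. The only points left genuinely implicit are the Carath\'{e}odory-type measurability and local integrability in $t$ of $1/p(\cdot,t)$ needed to solve the characteristic ODE, which are standard implicit hypotheses in this theorem (and for the integrability one can note $1/p(0,t)=a_{1}'(t)/a_{1}(t)$ with $a_{1}$ locally absolutely continuous and nonvanishing, plus the Carath\'{e}odory-class estimate $|1/p(v,t)|\leq \frac{1+|v|}{1-|v|}\,\frac{1}{\Re p(0,t)}$).
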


The method of constructing quasiconformal extension criteri\textbf{a} is
based on the following result due to Becker (see \cite{Bec}, \cite{Be2} and
also \cite{Be3}).

\begin{theorem}
\label{t2}Suppose that $\mathcal{L}(z,t)$ is a Loewner chain for which $%
p(z,t)$ given in (\ref{1.1}) satisfies the condition%
\begin{eqnarray*}
p(z,t) &\in &\mathcal{U}(k):=\left\{ w\in {\mathbb{C}}:\left\vert \frac{w-1}{%
w+1}\right\vert \leq k\right\} \\
&=&\left\{ w\in {\mathbb{C}}:\left\vert w-\frac{1+k^{2}}{1-k^{2}}\right\vert
\leq \frac{2k}{1-k^{2}}\right\} ,\ \ \ \left( 0\leq k<1\right)
\end{eqnarray*}%
for all $z\in \mathcal{U}$ and $t\geq 0.$ Then $\mathcal{L}(z,t)$ admits a
continuous extension to $\overline{\mathcal{U}}$ for each $t\geq 0$ and the
function $F(z,\overline{z})$ defined by%
\begin{equation*}
F(z,\overline{z})=\left\{ 
\begin{array}{c}
\mathcal{L}(z,0),\text{ \ \ \ \ \ \ \ if \ }\left\vert z\right\vert <1 \\ 
\mathcal{L}(\frac{z}{\left\vert z\right\vert },\log \left\vert z\right\vert
),\text{ \ if \ }\left\vert z\right\vert \geq 1%
\end{array}%
\right.
\end{equation*}%
is a $k-$quasiconformal extension of $\mathcal{L}(z,0)$ to ${\mathbb{C}}.$
\end{theorem}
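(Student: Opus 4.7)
The plan is to treat the formula for $F$ as a candidate homeomorphism of the plane and verify the three ingredients of $k$-quasiconformality: injectivity, continuity across the circle $|z|=1$, and the Beltrami inequality $|\partial_{\overline{z}}F|\leq k|\partial_z F|$ almost everywhere on $\mathbb{C}$. Injectivity is essentially built into the construction, since axiom (i) of a Loewner chain gives univalence of each slice $\mathcal{L}(\cdot,t)$ on $\mathcal{U}$, while axiom (ii) forces the strict nesting $\mathcal{L}(\mathcal{U},t)\subsetneq\mathcal{L}(\mathcal{U},s)$ for $0\leq t<s$; two distinct points $z_1,z_2$ with $|z_j|\geq 1$ and $F(z_1)=F(z_2)$ would thus force a boundary point of one image domain to coincide with an interior point of a strictly larger one, which is impossible.

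For the Beltrami bound I would fix $|z|>1$, set $\zeta=z/|z|$ and $s=\log|z|$, and apply the chain rule using
\[
\partial_z\zeta=\frac{1}{2|z|},\quad \partial_{\overline{z}}\zeta=-\frac{z^2}{2|z|^3},\quad \partial_z s=\frac{1}{2z},\quad \partial_{\overline{z}}s=\frac{1}{2\overline{z}}.
\]
Substituting the Loewner equation (\ref{1.1}) in the form $\partial_s\mathcal{L}=\zeta\,\partial_\zeta\mathcal{L}/p(\zeta,s)$ and simplifying yields
\[
\partial_z F=\frac{p(\zeta,s)+1}{2|z|\,p(\zeta,s)}\,\partial_\zeta\mathcal{L},\qquad \partial_{\overline{z}}F=-\frac{z^2\bigl(p(\zeta,s)-1\bigr)}{2|z|^3\,p(\zeta,s)}\,\partial_\zeta\mathcal{L},
\]
so that
\[
\left|\frac{\partial_{\overline{z}}F}{\partial_z F}\right|=\left|\frac{p(\zeta,s)-1}{p(\zeta,s)+1}\right|\leq k
\]
by the hypothesis $p\in\mathcal{U}(k)$. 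On $\mathcal{U}$ itself $F$ is holomorphic, so $\partial_{\overline{z}}F=0$; hence the dilatation bound holds pointwise off the unit circle.

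The main obstacle is the continuous extension of each $\mathcal{L}(\cdot,t)$ to $\overline{\mathcal{U}}$ and the verification that $F$ is ACL on $\mathbb{C}$, without which the preceding a.e.\ bound does not yield quasiconformality in the sense defined in the introduction. The key input is that $p\in\mathcal{U}(k)$ gives $\Re p\geq (1-k)/(1+k)>0$ uniformly in $(z,t)$, so the normalized family $\{\mathcal{L}(z,t)/a_1(t)\}$ is not merely normal but consists of univalent functions whose images are uniform quasidisks; classical distortion and Hölder-type estimates for such families then provide continuous (indeed Hölder) boundary values that depend continuously on $t$, making $F$ continuous on all of $\mathbb{C}$. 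Once continuity and the ACL property across the circle are handled by the standard removable-set argument for quasiconformal maps, combining continuity, injectivity, and the pointwise dilatation bound above identifies $F$ as the desired $k$-quasiconformal extension of $\mathcal{L}(z,0)$.
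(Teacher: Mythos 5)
This statement is Becker's classical quasiconformal-extension theorem; the paper does not prove it but quotes it with references to Becker's original articles, so there is no internal proof to compare yours against. Taken on its own terms, your formal dilatation computation for $|z|>1$ is correct and is the standard way to see where the bound $|\partial_{\overline{z}}F|\leq k|\partial_{z}F|$ comes from. However, the two steps you yourself flag as remaining are not incidental technicalities: they are the actual content of the theorem, and neither is established by your argument.

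Concretely: (i) The injectivity argument fails as stated. Subordination gives $\mathcal{L}(\mathcal{U},t)\subseteq \mathcal{L}(\mathcal{U},s)$ for $t<s$, but not $\overline{\mathcal{L}(\mathcal{U},t)}\subset \mathcal{L}(\mathcal{U},s)$, which is what you need to conclude that boundary values at different times cannot collide. For a general Loewner chain (e.g.\ a chain of slit domains) the boundaries of $\mathcal{L}(\mathcal{U},t)$ and $\mathcal{L}(\mathcal{U},s)$ can share points, so the dichotomy ``boundary point of the smaller domain versus interior point of the larger'' is false without invoking $p\in\mathcal{U}(k)$; and injectivity of $\zeta\mapsto\mathcal{L}(\zeta,s)$ on $|\zeta|=1$ is likewise not free. (ii) The continuous extension to $\overline{\mathcal{U}}$ and the ACL property are dispatched by appealing to ``classical distortion and H\"{o}lder-type estimates for uniform quasidisks,'' but the assertion that the images are uniform quasidisks is essentially equivalent to the conclusion being proved, so this is circular. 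Becker's actual argument derives from $p\in\mathcal{U}(k)$ a quantitative estimate on the transition functions $v(z,s,t)$ (defined by $\mathcal{L}(z,s)=\mathcal{L}(v(z,s,t),t)$), controlling $|v(z,s,t)|$ above and below in terms of $e^{s-t}|z|$; that single estimate yields the boundary extension, the injectivity of $F$, and, combined with your dilatation computation, the $k$-quasiconformality. Without supplying that estimate or an equivalent, your write-up verifies only the easy part of the theorem.
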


Examples of quasiconformal extension criteri\textbf{a} can be found in \cite%
{Ah}, \cite{AnHi}, \cite{Bet}, \cite{Kr}, \cite{Pf} and more recently in 
\cite{Ho1}, \cite{Ho2}, \cite{Ho3}.

\section{Main Results}

In this section, making use of Theorem \ref{t1}, we obtain certain
sufficient conditions for univalence of an integral operator.

\begin{theorem}
\label{t3}\textit{Let }$m$\textit{\ be a positive real number and let }$%
\alpha ,$ $\beta $\textit{\ be complex numbers } such that $\Re \alpha
<1/2,\;\Re \beta >0$ and$\;f\in \mathcal{A}.$\textit{\ Let} $g$\textit{\ and 
}$h$\textit{\ be two analytic functions in }$\mathcal{U},$ $%
g(z)=1+b_{1}z+...,$ $h(z)=c_{0}+c_{1}z+...$\textit{. If the following
inequalities }%
\begin{equation}
\left\vert \frac{f^{\prime }(z)}{g(z)-\alpha }-\frac{m-1}{2}\right\vert <%
\frac{m+1}{2},  \label{2.22}
\end{equation}%
\textit{and}%
\begin{equation*}
\left\vert \left( \frac{f^{\prime }(z)}{g(z)-\alpha }-1\right) \left\vert
z\right\vert ^{\beta (m+1)}\right.
\end{equation*}%
\begin{equation*}
+\left( 1-\left\vert z\right\vert ^{\beta \left( m+1\right) }\right) \left[
2z^{\beta }\frac{f^{\prime }(z)h(z)}{g(z)-\alpha }+\frac{1}{\beta }\frac{%
zg^{\prime }(z)}{g(z)-\alpha }\right]
\end{equation*}%
\begin{equation}
\left. +\frac{z^{\beta +1}\left( 1-\left\vert z\right\vert ^{\beta \left(
m+1\right) }\right) ^{2}}{\left\vert z\right\vert ^{\beta \left( m+1\right) }%
}\left[ \frac{z^{\beta -1}f^{\prime }(z)h^{2}(z)}{g(z)-\alpha }+\frac{1}{%
\beta }\left( \frac{g^{\prime }(z)h(z)}{g(z)-\alpha }-h^{\prime }(z)\right) %
\right] -\frac{m-1}{2}\right\vert \leq \frac{m+1}{2}  \label{2.2}
\end{equation}%
are true for all $z\in \mathcal{U},$ then the function $\mathcal{F}_{\beta
}(z)$ defined by%
\begin{equation}
\mathcal{F}_{\beta }(z)=\left[ \beta \dint\limits_{0}^{z}u^{\beta
-1}f^{\prime }(u)du\right] ^{1/\beta }  \label{2.3}
\end{equation}%
is analytic and univalent\textit{\ in }$\mathcal{U},$\textit{\ where the
principal branch is intended.}
\end{theorem}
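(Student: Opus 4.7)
The plan is to apply Pommerenke's criterion (Theorem \ref{t1}) to an auxiliary Loewner chain $\mathcal{L}(z,t)$ cooked up so that $\mathcal{L}(z,0)=\mathcal{F}_{\beta}(z)$. The first thing to check is that the integrand $u^{\beta-1}f'(u)$ is well-defined analytic on $\mathcal{U}$: under $\Re\alpha<1/2$ and $g(0)=1$, an elementary estimate shows $g(z)-\alpha\neq 0$ for $z\in\mathcal{U}$, so the rational expressions appearing in (\ref{2.22}) and (\ref{2.2}) are analytic. Combined with the choice of principal branch, $\mathcal{F}_{\beta}$ is analytic in a neighborhood of $0$ with $\mathcal{F}_{\beta}(0)=0$ and $\mathcal{F}_{\beta}'(0)=1$.

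Second, guided by the appearance of the weights $|z|^{\beta(m+1)}$ and $1-|z|^{\beta(m+1)}$ in the hypothesis (\ref{2.2}), I would look for the chain in the form
\[
[\mathcal{L}(z,t)]^{\beta}=\beta\int_{0}^{e^{-t}z}u^{\beta-1}f'(u)\,du
+\bigl(e^{\beta(m+1)t}-e^{-\beta t}\bigr)\,e^{-\beta t}z^{\beta}\,\Psi\bigl(e^{-t}z\bigr),
\]
where the auxiliary analytic function $\Psi$ is built out of $f'$, $g-\alpha$ and $h$ so that, on one hand, the correction term vanishes at $t=0$ (so that $\mathcal{L}(z,0)=\mathcal{F}_{\beta}(z)$) and, on the other hand, so that the specific combination appearing inside the absolute value in (\ref{2.2}) will show up naturally when one computes $p(z,t)$. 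A power-series expansion should then give $a_{1}(t)=e^{(m+1)t}\bigl(1+o(1)\bigr)$, in particular $|a_{1}(t)|\to\infty$ as $t\to\infty$. The analyticity of $\mathcal{L}(\cdot,t)$ on some $\mathcal{U}_{r_{0}}$ uniformly in $t$, the local absolute continuity in $t$, and the normality of the family $\{\mathcal{L}(\cdot,t)/a_{1}(t)\}$ should follow from the standard arguments used in this circle of ideas (cf.\ \cite{Radu1,Radu3,Tudor,Ove}).

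The heart of the proof is then the verification of the Carath\'eodory condition $\Re p(z,t)>0$, where $p(z,t)=z\,\partial_{z}\mathcal{L}/\partial_{t}\mathcal{L}$ is defined via the Loewner equation (\ref{1.1}). I would differentiate the displayed expression for $[\mathcal{L}(z,t)]^{\beta}$ in $z$ and in $t$, divide, and massage the algebra so that the bracketed quantity on the left-hand side of (\ref{2.2}) is recognized as an affine function of $p(z,t)$. Once this identification is made, (\ref{2.2}) becomes exactly the statement that $p(z,t)$ lies in a disk tangent to the imaginary axis at the origin, which forces $\Re p(z,t)>0$, and (\ref{2.22}) handles the boundary limit $t\to 0^{+}$ where the factor $1-|z|^{\beta(m+1)}$ collapses the long second-line expression to $f'(z)/(g(z)-\alpha)$. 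The main obstacle I foresee is precisely this bookkeeping in the derivative computation: choosing $\Psi$ so that all the asymmetric pieces (the $1/\beta$-weighted terms, the two occurrences of $h$, the square $h^{2}$, the $\beta+1$ and $\beta-1$ exponents) line up exactly with the structure of (\ref{2.2}), with no extraneous residue. Once this matching is achieved, Theorem \ref{t1} applies and yields the analyticity and univalence of $\mathcal{F}_{\beta}=\mathcal{L}(\cdot,0)$ on $\mathcal{U}$.
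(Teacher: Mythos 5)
Your overall strategy---building a Loewner chain with $\mathcal{L}(z,0)=\mathcal{F}_{\beta}(z)$ and invoking Pommerenke's theorem---is exactly the paper's, but two essential ingredients are missing, and one of them invalidates your ansatz. First, the separable form you propose for the correction term, a $t$-dependent scalar times $e^{-\beta t}z^{\beta}\Psi(e^{-t}z)$ with $\Psi$ a fixed function of one variable, cannot produce the hypothesis (\ref{2.2}): the terms in (\ref{2.2}) involving $h^{2}$, $h^{\prime}$ and the quadratic weight $\left(1-\left\vert z\right\vert ^{\beta(m+1)}\right)^{2}/\left\vert z\right\vert^{\beta(m+1)}$ arise precisely from differentiating a M\"obius-type correction in which the $t$-scalar sits in both numerator and denominator. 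The paper's chain is
\begin{equation*}
\mathcal{L}(z,t)=\left[ \beta \dint\limits_{0}^{e^{-t}z}u^{\beta -1}f^{\prime }(u)\,du+\frac{\left( e^{\beta mt}-e^{-\beta t}\right) z^{\beta }\left( g\left( e^{-t}z\right) -\alpha \right) }{1+\left( e^{\beta mt}-e^{-\beta t}\right) z^{\beta }h\left( e^{-t}z\right) }\right] ^{1/\beta },
\end{equation*}
and the quotient rule applied to the fraction is what generates the second-order terms; with your separable $\Psi$ the ratio $(p-1)/(p+1)$ stays affine in the scalar and the ``matching'' you defer to bookkeeping is not achievable. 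Relatedly, the hypothesis $\Re\alpha<1/2$ does not give $g(z)-\alpha\neq 0$ (no bound on $g$ is assumed); its actual role is to guarantee $a_{1}(t)=\left[(1-\alpha)e^{\beta mt}+\alpha e^{-\beta t}\right]^{1/\beta}\neq 0$ for all $t\geq 0$, so that a branch of the $1/\beta$ power can be chosen and $|a_{1}(t)|\rightarrow\infty$.

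Second, even with the correct chain, you cannot read $\Re p(z,t)>0$ directly off (\ref{2.2}), because the quantity $\mathcal{G}(z,t)$ appearing in $w=(p-1)/(p+1)$ contains $e^{-\beta(m+1)t}$ where (\ref{2.2}) has $\left\vert z\right\vert^{\beta(m+1)}$; these agree only when $\left\vert z\right\vert=1$. The paper bridges this by a maximum-principle argument: for fixed $t>0$ the function $\mathcal{H}(z,t)=\mathcal{G}(z,t)-\frac{m-1}{2}$ is analytic on $\overline{\mathcal{U}}$ (since $|e^{-t}z|\leq e^{-t}<1$), so its modulus is maximized at some $e^{i\theta}$, where the substitution $u=e^{-t}e^{i\theta}$ turns the boundary value into exactly the left-hand side of (\ref{2.2}) at the interior point $u$, giving the bound $\leq\frac{m+1}{2}$ and hence strict inequality inside; the case $t=0$ is handled separately by (\ref{2.22}), where the vanishing factor is $1-e^{-\beta(m+1)\cdot 0}=0$, not $1-\left\vert z\right\vert^{\beta(m+1)}$ as you state. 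Without this step the Carath\'eodory condition is not established, so the proof as sketched does not close.
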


\begin{proof}
We shall prove that there exists a real number $r,$ $r\in (0,1]$ such that
the function $\mathcal{L}:\mathcal{U}_{r}\times I\rightarrow 
%TCIMACRO{\U{2102} }%
%BeginExpansion
\mathbb{C}
%EndExpansion
,$ defined formally by%
\begin{equation}
\mathcal{L}(z,t)=\left[ \beta \dint\limits_{0}^{e^{-t}z}u^{\beta
-1}f^{\prime }(u)du+\frac{\left( e^{\beta mt}-e^{-\beta t}\right) z^{\beta
}\left( g\left( e^{-t}z\right) -\alpha \right) }{1+\left( e^{\beta
mt}-e^{-\beta t}\right) z^{\beta }h\left( e^{-t}z\right) }\right] ^{1/\beta }
\label{2.4}
\end{equation}%
is analytic in $\mathcal{U}_{r}$ for all $t\in I.$

Because $f\in \mathcal{A}$ we have 
\begin{equation*}
f(z)=z+a_{2}z^{2}+...+a_{n}z^{n}+...,\text{ \ \ }\forall z\in \mathcal{U}.
\end{equation*}%
Let us denote by%
\begin{equation}
\varphi _{1}(z,t)=\beta \dint\limits_{0}^{e^{-t}z}u^{\beta -1}f^{\prime
}(u)du.  \label{2.5}
\end{equation}%
We obtain $\varphi _{1}(z,t)=\left( e^{-t}z\right) ^{\beta }+\frac{2\beta
a_{2}}{\beta +1}\left( e^{-t}z\right) ^{\beta +1}+...$ and we observe that%
\begin{equation}
\varphi _{1}(z,t)=z^{\beta }\varphi _{2}(z,t)  \label{2.6}
\end{equation}%
where%
\begin{equation}
\varphi _{2}(z,t)=e^{-\beta t}+\dsum\limits_{n=2}^{\infty }\frac{n\beta }{%
n+\beta -1}a_{n}e^{-\left( n+\beta -1\right) t}z^{n-1}.  \label{2.7}
\end{equation}%
The function $\varphi _{2}$ is analytic in $\mathcal{U}$ for all $t\in I,$
since%
\begin{equation*}
\overline{\underset{n\rightarrow \infty }{\lim }}\sqrt[n]{\left\vert \frac{%
n\beta }{n+\beta -1}a_{n}e^{-\left( n+\beta -1\right) t}\right\vert }=e^{-t}%
\overline{\underset{n\rightarrow \infty }{\lim }}\sqrt[n]{\left\vert
a_{n}\right\vert }.
\end{equation*}%
It is clear that if $z\in \mathcal{U},$ then $e^{-t}z\in \mathcal{U}$ for
all $t\in I$ and because $f^{\prime }(0)=1,$ there exists a disk $\mathcal{U}%
_{r_{1}},$ $0<r_{1}\leq 1$ in which $f^{\prime }(e^{-t}z)\neq 0$ for all $%
t\geq 0.$

From the analyticity of $f$ it follows that the function $\varphi _{3}$ is
also analytic in $\mathcal{U}_{r_{1}},$ where%
\begin{equation}
\varphi _{3}(z,t)=1+\left( e^{\beta mt}-e^{-\beta t}\right) z^{\beta
}h\left( e^{-t}z\right) .  \label{2.8}
\end{equation}%
We have $\varphi _{3}(0,t)=1$ and then there exists a disk $\mathcal{U}%
_{r_{2}},$ $0<r_{2}\leq r_{1}$ in which $\varphi _{3}(z,t)\neq 0$ for all $%
t\geq 0.$

Then the function%
\begin{equation}
\varphi _{4}(z,t)=\varphi _{2}(z,t)+\left( e^{\beta mt}-e^{-\beta t}\right) 
\frac{\left( g\left( e^{-t}z\right) -\alpha \right) }{\varphi _{3}(z,t)}
\label{2.9}
\end{equation}%
is also analytic in $\mathcal{U}_{r_{2}}$ and $\varphi _{4}(0,t)=(1-\alpha
)e^{\beta mt}+\alpha e^{-\beta t}.$ From $\Re \alpha <1/2,$ $\Re \beta >0$
we deduce that $\varphi _{4}(0,t)\neq 0$ for all $t\in I.$ Therefore, there
exists a disk $\mathcal{U}_{r},$ $0<r\leq r_{2}$ in which $\varphi
_{4}(0,t)\neq 0$ for all $t\in I$ and we can choose an analytic branch of $%
\left[ \varphi _{4}(z,t)\right] ^{1/\beta },$ denoted by $\varphi _{5}(z,t).$
We choose the uniform branch which is equal to $a_{1}(t)=\left[ (1-\alpha
)e^{\beta mt}+\alpha e^{-\beta t}\right] ^{1/\beta }$ at the origin, and for 
$a_{1}(t)$ we get $\underset{t\rightarrow \infty }{\lim }\left\vert
a_{1}(t)\right\vert =\infty .$ Moreover, we have $a_{1}(t)\neq 0$ for all $%
t\geq 0.$

From (\ref{2.4})-(\ref{2.9}) it results that the relation (\ref{2.4}) may be
written as%
\begin{equation}
\mathcal{L}(z,t)=z\varphi _{5}(z,t)  \label{2.10}
\end{equation}%
and hence we obtain that the function $\mathcal{L}(z,t)$ is analytic in $%
\mathcal{U}_{r},$%
\begin{equation*}
\mathcal{L}(z,t)=a_{1}(t)z+...,\text{ }\forall z\in \mathcal{U}_{r},\text{ }%
\forall t\in I.
\end{equation*}%
$\mathcal{L}(z,t)$ is an analytic function in $\mathcal{U}_{r}$ for all $%
t\in I$ and then it follows that there is a number $r_{3},$ $0<r_{3}<r$ and
a positive constant $K=K(r_{3})$ such that%
\begin{equation*}
\left\vert \frac{\mathcal{L}(z,t)}{a_{1}(t)}\right\vert <K,\text{ }\forall
z\in \mathcal{U}_{r_{3}},\text{ }t\geq 0.
\end{equation*}%
Then, by Montel's theorem, it results that $\left\{ \frac{\mathcal{L}(z,t)}{%
a_{1}(t)}\right\} _{t\geq 0}$ is a normal family in $\mathcal{U}_{r_{3}}.$

From (\ref{2.10}) we have%
\begin{equation}
\frac{\partial \mathcal{L}(z,t)}{\partial t}=z\frac{\partial \varphi
_{5}(z,t)}{\partial t}.  \label{2.11}
\end{equation}%
It is clear that $\frac{\partial \varphi _{5}(z,t)}{\partial t}$ is an
analytic function in $\mathcal{U}_{r_{3}}$ and then $\frac{\partial \mathcal{%
L}(z,t)}{\partial t}$ is also analytic function in $\mathcal{U}_{r_{3}}$.
Then, for all fixed numbers $T>0$ and $r_{4},$ $0<r_{4}<r_{3},$ there exists
a constant $K_{1}>0$ (which depends on $T$ and $r_{4}$) such that%
\begin{equation*}
\left\vert \frac{\partial \mathcal{L}(z,t)}{\partial t}\right\vert <K_{1},%
\text{ }\forall z\in \mathcal{U}_{r_{4}}\text{ and }t\in \lbrack 0,T].
\end{equation*}%
Therefore, the function $\mathcal{L}(z,t)$ is locally absolutely continuous
in $[0,\infty ),$ locally uniform with respect to $\mathcal{U}_{r_{4}}.$

Since $\frac{\partial \mathcal{L}(z,t)}{\partial t}$ is analytic in $%
\mathcal{U}_{r_{4}},$ from (\ref{2.11}) it results that there is a number $%
r_{0},$ $0<r_{0}<r_{4},$ such that $\frac{1}{z}\frac{\partial \mathcal{L}%
(z,t)}{\partial t}\neq 0,$ $\forall z\in \mathcal{U}_{r_{0}},$ and then the
function%
\begin{equation*}
p(z,t)=z\frac{\partial \mathcal{L}(z,t)}{\partial z}\diagup \frac{\partial 
\mathcal{L}(z,t)}{\partial t}
\end{equation*}%
is analytic in $\mathcal{U}_{r_{0}}$ for all $t\geq 0.$

In order to prove that the function $p(z,t)$ has an analytic extension with
positive real part in $\mathcal{U},$ to for all $t\geq 0,$ it is sufficient
to prove that the function $w(z,t)$ defined in $\mathcal{U}_{r_{0}}$ by%
\begin{equation*}
w(z,t)=\frac{p(z,t)-1}{p(z,t)+1}
\end{equation*}%
can be extended analytically in $\mathcal{U},$ $\left\vert w(z,t)\right\vert
<1$ for all $z\in \mathcal{U}$ and $t\geq 0.$

After some calculations we obtain:%
\begin{equation}
w(z,t)=\frac{2}{m+1}\mathcal{G}(z,t)-\frac{m-1}{m+1},  \label{2.122}
\end{equation}%
where%
\begin{eqnarray}
\mathcal{G}(z,t) &=&e^{-\beta \left( m+1\right) t}\left( \frac{f^{\prime
}(e^{-t}z)}{g(e^{-t}z)-\alpha }-1\right)  \notag \\
&&+\left( 1-e^{-\beta \left( m+1\right) t}\right) \left[ 2e^{-\beta
t}z^{\beta }\frac{f^{\prime }(e^{-t}z)h(e^{-t}z)}{g(e^{-t}z)-\alpha }+\frac{%
e^{-t}z}{\beta }\frac{g^{\prime }(e^{-t}z)}{g(e^{-t}z)-\alpha }\right] 
\notag \\
&&+\frac{e^{-\beta t}z^{\beta }\left( 1-e^{-\beta \left( m+1\right)
t}\right) ^{2}}{e^{-\beta \left( m+1\right) t}}  \notag \\
&&\times \left[ e^{-\beta t}z^{\beta }\frac{f^{\prime
}(e^{-t}z)h^{2}(e^{-t}z)}{g(e^{-t}z)-\alpha }+\frac{e^{-t}z}{\beta }\left( 
\frac{h(e^{-t}z)g^{\prime }(e^{-t}z)}{g(e^{-t}z)-\alpha }-h^{\prime
}(e^{-t}z)\right) \right] .  \label{2.12}
\end{eqnarray}%
for $z\in \mathcal{U}$ and $t\geq 0.$

The inequality $\left\vert w(z,t)\right\vert <1$ for all $z\in \mathcal{U}$
and $t\geq 0,$ where $w(z,t)$ defined by (\ref{2.122}), is equivalent to%
\begin{equation}
\left\vert \mathcal{G}(z,t)-\frac{m-1}{2}\right\vert <\frac{m+1}{2},\text{ }%
\forall z\in \mathcal{U}\text{ and }t\geq 0.  \label{2.13}
\end{equation}%
Define%
\begin{equation}
\mathcal{H}(z,t)=\mathcal{G}(z,t)-\frac{m-1}{2},\text{ }\forall z\in 
\mathcal{U}\text{ and }t\geq 0.  \label{2.14}
\end{equation}%
In view of (\ref{2.22}) and (\ref{2.2}), from (\ref{2.12}) and (\ref{2.14})
we have%
\begin{equation}
\left\vert \mathcal{H}(z,0)\right\vert =\left\vert \left( \frac{f^{\prime
}(z)}{g(z)-\alpha }-1\right) -\frac{m-1}{2}\right\vert <\frac{m+1}{2}.
\label{2.15}
\end{equation}%
Let $t>0,$ $z\in \mathcal{U-\{}0\}.$ In this case the function $\mathcal{H}%
(z,t)$ is analytic in $\overline{\mathcal{U}}$ because $\left\vert
e^{-t}z\right\vert \leq e^{-t}<1,$ for all $z\in \overline{\mathcal{U}}.$
Using the maximum principle for $z\in \mathcal{U}$ and $t>0$ we have%
\begin{equation*}
\left\vert \mathcal{H}(z,t)\right\vert <\underset{\left\vert \xi \right\vert
=1}{\max }\left\vert \mathcal{H}(\xi ,t)\right\vert =\left\vert \mathcal{H}%
(e^{i\theta },t)\right\vert ,
\end{equation*}%
where $\theta =\theta (t)$ is a real number.

Let $u=e^{-t}e^{i\theta }.$ We have $\left\vert u\right\vert =e^{-t}$ and $%
e^{-\beta \left( m+1\right) t}=\left( e^{-t}\right) ^{\beta \left(
m+1\right) }=\left\vert u\right\vert ^{\beta \left( m+1\right) }.$ From (\ref%
{2.12}), we have 
\begin{eqnarray*}
\left\vert \mathcal{G}(e^{i\theta },t)\right\vert &=&\left\vert \left\vert
u\right\vert ^{\beta \left( m+1\right) }\left( \frac{f^{\prime }(u)}{%
g(u)-\alpha }-1\right) \right. \\
&&+\left( 1-\left\vert u\right\vert ^{\beta \left( m+1\right) }\right) \left[
\frac{2u^{\beta }f^{\prime }(u)h(u)}{g(u)-\alpha }+\frac{u}{\beta }\frac{%
g^{\prime }(u)}{g(u)-\alpha }\right] \\
&&+\frac{u^{\beta }\left( 1-\left\vert u\right\vert ^{\beta \left(
m+1\right) }\right) ^{2}}{\left\vert u\right\vert ^{\beta \left( m+1\right) }%
} \\
&&\left. \times \left[ \frac{u^{\beta }f^{\prime }(u)h^{2}(u)}{g(u)-\alpha }+%
\frac{u}{\beta }\left( \frac{h(u)g^{\prime }(u)}{g(u)-\alpha }-h^{\prime
}(u)\right) \right] -\frac{m-1}{2}\right\vert .
\end{eqnarray*}%
Since $u\in \mathcal{U}$, the inequality (\ref{2.2}) implies that%
\begin{equation}
\left\vert \mathcal{H}(e^{i\theta },t)\right\vert \leq \frac{m+1}{2},
\label{2.16}
\end{equation}%
and from (\ref{2.15}) and (\ref{2.16}) it follows that the inequality (\ref%
{2.13})%
\begin{equation*}
\left\vert \mathcal{H}(z,t)\right\vert =\left\vert \mathcal{G}(z,t)-\frac{m-1%
}{2}\right\vert <\frac{m+1}{2}
\end{equation*}%
is satisfied for all $z\in \mathcal{U}$ and $t\in I.$ Therefore $\left\vert
w(z,t)\right\vert <1,$ for all $z\in \mathcal{U}$ and $t\geq 0.$

Since all the conditions of Theorem \ref{t1} are satisfied, we obtain that
the function $\mathcal{L}(z,t)$ has an analytic and univalent extension to
the whole unit disk $\mathcal{U},$ for all $t\in I.$ For $t=0$ we have $%
\mathcal{L}(z,0)=\mathcal{F}_{\beta }(z),$ for $z\in \mathcal{U}$ and
therefore, the function $\mathcal{F}_{\beta }(z)$ is analytic and univalent
in $\mathcal{U}.$
\end{proof}

For $g=f^{\prime }$ in Theorem \ref{t3}, we obtain another univalence
criterion as follows.

\begin{corollary}
\label{c1}\textit{Let }$m$\textit{\ be a positive real number and let }$%
\alpha ,$ $\beta $\textit{\ be complex numbers such that }$\Re \alpha <1/2,$ 
$\Re \beta >0$ and$\;f\in \mathcal{A}.$\textit{\ Let }$h$\textit{\ be an
analytic functions in }$\mathcal{U},$ $h(z)=c_{0}+c_{1}z+...$\textit{. If
the following inequalities}%
\begin{equation}
\left\vert \frac{f^{\prime }(z)}{f^{\prime }(z)-\alpha }-\frac{m+1}{2}%
\right\vert <\frac{m+1}{2},  \label{2.17}
\end{equation}%
\textit{and}%
\begin{equation*}
\left\vert \left( \frac{f^{\prime }(z)}{f^{\prime }(z)-\alpha }-1\right)
\left\vert z\right\vert ^{\beta (m+1)}\right.
\end{equation*}%
\begin{equation*}
+\left( 1-\left\vert z\right\vert ^{\beta \left( m+1\right) }\right) \left[
2z^{\beta }\frac{f^{\prime }(z)h(z)}{f^{\prime }(z)-\alpha }+\frac{1}{\beta }%
\frac{zf^{\prime \prime }(z)}{f^{\prime }(z)-\alpha }\right]
\end{equation*}%
\begin{equation}
\left. +\frac{z^{\beta +1}\left( 1-\left\vert z\right\vert ^{\beta \left(
m+1\right) }\right) ^{2}}{\left\vert z\right\vert ^{\beta \left( m+1\right) }%
}\left[ \frac{z^{\beta -1}f^{\prime }(z)h^{2}(z)}{f^{\prime }(z)-\alpha }+%
\frac{1}{\beta }\left( \frac{f^{\prime \prime }(z)h(z)}{f^{\prime
}(z)-\alpha }-h^{\prime }(z)\right) \right] -\frac{m-1}{2}\right\vert \leq 
\frac{m+1}{2}  \label{2.18}
\end{equation}%
are true for all $z\in \mathcal{U},$ then the function $\mathcal{F}_{\beta
}(z)$ defined by (\ref{2.3}) is analytic and univalent\textit{\ in }$%
\mathcal{U},$\textit{\ where the principal branch is intended.}
\end{corollary}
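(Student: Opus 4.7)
The plan is to deduce Corollary \ref{c1} directly from Theorem \ref{t3} by specializing the auxiliary analytic function $g$. The only thing that needs checking is that the choice $g(z)=f^{\prime }(z)$ is legitimate under the hypotheses of Theorem \ref{t3}, after which the two displayed inequalities of the corollary are literally the two displayed inequalities of the theorem with this substitution.

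First, I would verify the structural hypothesis on $g$. Since $f\in \mathcal{A}$, we have $f(z)=z+a_{2}z^{2}+a_{3}z^{3}+\cdots $ analytic in $\mathcal{U}$, so $f^{\prime }(z)=1+2a_{2}z+3a_{3}z^{2}+\cdots $ is analytic in $\mathcal{U}$ with $f^{\prime }(0)=1$. Therefore $g:=f^{\prime }$ satisfies the requirement $g(z)=1+b_{1}z+b_{2}z^{2}+\cdots $ of Theorem \ref{t3} (with $b_{n}=(n+1)a_{n+1}$), and the constraints $\Re \alpha <1/2$, $\Re \beta >0$ are unaffected.

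Next, since $g^{\prime }(z)=f^{\prime \prime }(z)$, substituting $g=f^{\prime }$ into (\ref{2.22}) and (\ref{2.2}) replaces every occurrence of $g(z)$ by $f^{\prime }(z)$ and every occurrence of $g^{\prime }(z)$ by $f^{\prime \prime }(z)$. A direct term-by-term comparison shows that the resulting inequalities coincide with (\ref{2.17}) and (\ref{2.18}) respectively (the function $h$ plays the same role in both statements). Thus the hypotheses (\ref{2.17}) and (\ref{2.18}) of the corollary imply the hypotheses (\ref{2.22}) and (\ref{2.2}) of Theorem \ref{t3} for the pair $(f,h)$ with $g=f^{\prime }$.

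Finally, an application of Theorem \ref{t3} yields that the integral operator $\mathcal{F}_{\beta }(z)$ defined by (\ref{2.3}) is analytic and univalent in $\mathcal{U}$, which is the conclusion of the corollary. There is no genuine obstacle here: the entire argument is a specialization of the main theorem, and the only point requiring attention is the routine verification that $f^{\prime }$ meets the normalization $g(0)=1$ built into Theorem \ref{t3}, which follows immediately from $f\in \mathcal{A}$.
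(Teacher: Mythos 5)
Your overall strategy --- specializing $g=f^{\prime}$ in Theorem \ref{t3} --- is exactly what the paper does (its entire proof is the sentence introducing the corollary), and your check that $f^{\prime}(z)=1+2a_{2}z+\cdots$ meets the normalization $g(z)=1+b_{1}z+\cdots$ is correct. However, your claim that a direct term-by-term comparison shows the substituted inequalities \emph{coincide} with (\ref{2.17}) and (\ref{2.18}) fails for the first one: substituting $g=f^{\prime}$ into (\ref{2.22}) yields
\begin{equation*}
\left\vert \frac{f^{\prime }(z)}{f^{\prime }(z)-\alpha }-\frac{m-1}{2}\right\vert <\frac{m+1}{2},
\end{equation*}
whereas (\ref{2.17}) reads $\left\vert \frac{f^{\prime }(z)}{f^{\prime }(z)-\alpha }-\frac{m+1}{2}\right\vert <\frac{m+1}{2}$. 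These are disks of the same radius but with different centers, $\frac{m-1}{2}$ versus $\frac{m+1}{2}$, and neither contains the other; so (\ref{2.17}) does not imply (\ref{2.22}) with $g=f^{\prime}$, and the deduction as you wrote it does not follow from the literal statement of Theorem \ref{t3}.

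The mismatch originates in the paper itself: in the proof of Theorem \ref{t3} the hypothesis on $f^{\prime}/(g-\alpha)$ is used only through (\ref{2.15}), namely $\left\vert \left( \frac{f^{\prime }(z)}{g(z)-\alpha }-1\right) -\frac{m-1}{2}\right\vert <\frac{m+1}{2}$, which is the $\frac{m+1}{2}$-centered condition. Thus (\ref{2.22}) as printed is inconsistent with its own use in (\ref{2.15}) (an apparently dropped ``$-1$''), and (\ref{2.17}) is the form the proof of the theorem actually requires. To make your argument airtight you should either record this and deduce the corollary from the proof (rather than the statement) of Theorem \ref{t3}, or explicitly note the typo and restate (\ref{2.22}) in the corrected form before substituting. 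As it stands, the ``coincide'' step silently passes over a real discrepancy between the two first-order hypotheses.
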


If we choose $h=f^{\prime \prime }$ in Corollary \ref{c1}, we have another
univalence criterion as follows.

\begin{corollary}
\label{c2}\textit{Let }$m$\textit{\ be a positive real number and let }$%
\alpha ,$ $\beta $\textit{\ be complex numbers such that }$\Re \alpha <1/2,$ 
$\Re \beta >0$\textit{\ }and$\;f\in \mathcal{A}.$\textit{\ If the following
inequalities }%
\begin{equation}
\left\vert \frac{f^{\prime }(z)}{f^{\prime }(z)-\alpha }-\frac{m+1}{2}%
\right\vert <\frac{m+1}{2},  \label{2.19}
\end{equation}%
\textit{and}%
\begin{equation*}
\left\vert \left( \frac{f^{\prime }(z)}{f^{\prime }(z)-\alpha }-1\right)
\left\vert z\right\vert ^{\beta (m+1)}\right.
\end{equation*}%
\begin{equation*}
+z\left( 1-\left\vert z\right\vert ^{\beta \left( m+1\right) }\right) \left[ 
\frac{f^{\prime \prime }(z)}{f^{\prime }(z)-\alpha }\left( 2z^{\beta
-1}f^{\prime }(z)+\frac{1}{\beta }\right) \right]
\end{equation*}%
\begin{equation}
\left. +\frac{z^{\beta +1}\left( 1-\left\vert z\right\vert ^{\beta \left(
m+1\right) }\right) ^{2}}{\left\vert z\right\vert ^{\beta \left( m+1\right) }%
}\left[ \frac{\left( f^{\prime \prime }(z)\right) ^{2}}{f^{\prime
}(z)-\alpha }\left( z^{\beta -1}f^{\prime }(z)+\frac{1}{\beta }\right) -%
\frac{1}{\beta }f^{\prime \prime \prime }(z)\right] -\frac{m-1}{2}%
\right\vert \leq \frac{m+1}{2}  \label{2.20}
\end{equation}%
are true for all $z\in \mathcal{U},$ then the function $\mathcal{F}_{\beta
}(z)$ defined by (\ref{2.3}) is analytic and univalent\textit{\ in }$%
\mathcal{U},$\textit{\ where the principal branch is intended.}
\end{corollary}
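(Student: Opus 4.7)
The plan is to deduce Corollary \ref{c2} as a direct specialization of Corollary \ref{c1} by choosing $h(z)=f^{\prime\prime}(z)$. First I would verify that this choice is admissible: since $f\in\mathcal{A}$, the second derivative $f^{\prime\prime}(z)=2a_{2}+6a_{3}z+\cdots$ is analytic in $\mathcal{U}$ and has the form $h(z)=c_{0}+c_{1}z+\cdots$ required by Corollary \ref{c1}, so no extra hypothesis is needed. The first inequality (\ref{2.19}) to be assumed is literally identical to (\ref{2.17}), so that condition transfers without modification.

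The substantive step is to substitute $h=f^{\prime\prime}$ and hence $h^{\prime}=f^{\prime\prime\prime}$ into the second inequality (\ref{2.18}) and to check that the resulting expression is exactly the left-hand side of (\ref{2.20}). I would do this bracket by bracket. In the middle bracket,
\[
2z^{\beta}\frac{f^{\prime}(z)h(z)}{f^{\prime}(z)-\alpha}+\frac{1}{\beta}\frac{zf^{\prime\prime}(z)}{f^{\prime}(z)-\alpha}
\]
becomes, after factoring out $\frac{zf^{\prime\prime}(z)}{f^{\prime}(z)-\alpha}$ (or equivalently $\frac{f^{\prime\prime}(z)}{f^{\prime}(z)-\alpha}$ from an overall $z$),
\[
z\,\frac{f^{\prime\prime}(z)}{f^{\prime}(z)-\alpha}\!\left(2z^{\beta-1}f^{\prime}(z)+\frac{1}{\beta}\right),
\]
matching the middle term of (\ref{2.20}). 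In the final bracket,
\[
\frac{z^{\beta-1}f^{\prime}(z)h^{2}(z)}{f^{\prime}(z)-\alpha}+\frac{1}{\beta}\!\left(\frac{f^{\prime\prime}(z)h(z)}{f^{\prime}(z)-\alpha}-h^{\prime}(z)\right),
\]
the substitution $h=f^{\prime\prime}$ produces a common factor $\frac{(f^{\prime\prime}(z))^{2}}{f^{\prime}(z)-\alpha}$ in the first two summands, leaving
\[
\frac{(f^{\prime\prime}(z))^{2}}{f^{\prime}(z)-\alpha}\!\left(z^{\beta-1}f^{\prime}(z)+\frac{1}{\beta}\right)-\frac{1}{\beta}f^{\prime\prime\prime}(z),
\]
which is exactly the final bracket of (\ref{2.20}). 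The first summand and the constant $-\tfrac{m-1}{2}$ are unaffected by the choice of $h$.

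Having matched all three brackets, both hypotheses of Corollary \ref{c1} are satisfied under the assumptions of Corollary \ref{c2}, so Corollary \ref{c1} yields that $\mathcal{F}_{\beta}(z)$ is analytic and univalent in $\mathcal{U}$. No obstacle of any depth arises here; the only potential pitfall is clerical, namely keeping track of the factorizations and of the $z$ versus $z^{\beta-1}$ bookkeeping in the middle bracket, which I have already traced above.
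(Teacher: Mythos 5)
Your proposal is correct and follows exactly the paper's route: the paper obtains Corollary \ref{c2} by setting $h=f^{\prime\prime}$ in Corollary \ref{c1}, which is precisely your specialization. Your explicit bracket-by-bracket verification of the substitution (which the paper omits) checks out, so nothing further is needed.
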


\begin{corollary}
\label{c33}\textit{Let }$m$\textit{\ be a positive real number and let }$%
\alpha ,$ $\beta $\textit{\ be complex numbers such that }$\Re \alpha <1/2,$ 
$\Re \beta >0$ and$\;f\in \mathcal{A}.$ \textit{If the following
inequalities }%
\begin{equation}
\left\vert \frac{f^{\prime }(z)}{f^{\prime }(z)-\alpha }-\frac{m+1}{2}%
\right\vert <\frac{m+1}{2},  \label{2.211}
\end{equation}%
\textit{and}%
\begin{equation}
\left\vert \left( \frac{f^{\prime }(z)}{f^{\prime }(z)-\alpha }-1\right)
\left\vert z\right\vert ^{\beta (m+1)}+\left( 1-\left\vert z\right\vert
^{\beta \left( m+1\right) }\right) \left[ \frac{1}{\beta }\frac{zf^{\prime
\prime }(z)}{f^{\prime }(z)-\alpha }\right] -\frac{m-1}{2}\right\vert \leq 
\frac{m+1}{2}  \label{2.212}
\end{equation}%
are true for all $z\in \mathcal{U},$ then the function $\mathcal{F}_{\beta
}(z)$ defined by (\ref{2.3}) is analytic and univalent\textit{\ in }$%
\mathcal{U},$\textit{\ where the principal branch is intended.}
\end{corollary}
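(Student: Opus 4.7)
The plan is to derive Corollary \ref{c33} as a direct specialization of Corollary \ref{c1} obtained by choosing $h\equiv 0$. The constant function $h(z)=0$ is analytic in $\mathcal{U}$ and trivially conforms to the stipulated expansion $h(z)=c_{0}+c_{1}z+\dots$ (with every coefficient equal to zero), so it is an admissible choice in Corollary \ref{c1}.

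With $h\equiv 0$ one also has $h'(z)=0$. Inspecting the left-hand side of inequality \eqref{2.18}, the term $2z^{\beta}\,f'(z)h(z)/(f'(z)-\alpha)$ vanishes, and the entire third summand, which carries the factor $z^{\beta+1}(1-|z|^{\beta(m+1)})^{2}/|z|^{\beta(m+1)}$, vanishes as well since every term inside its square brackets contains either $h$ or $h'$ as a factor. What remains is precisely the expression appearing inside the absolute value in \eqref{2.212}. Moreover, inequality \eqref{2.211} coincides verbatim with \eqref{2.17}. Hence both hypotheses of Corollary \ref{c1} are satisfied under the assumptions of Corollary \ref{c33}, and the conclusion that $\mathcal{F}_{\beta}$ is analytic and univalent in $\mathcal{U}$ follows at once.

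There is essentially no obstacle in this approach; the only thing to check is that the zero function is admissible, which is immediate from the expansion $h(z)=c_{0}+c_{1}z+\dots$ with all coefficients zero. Alternatively, one could rerun the Loewner-chain argument from scratch: substituting $g=f'$ and $h=0$ into \eqref{2.4} reduces $\mathcal{L}(z,t)$ to $\bigl[\beta\int_{0}^{e^{-t}z}u^{\beta-1}f'(u)\,du+(e^{\beta m t}-e^{-\beta t})z^{\beta}(f'(e^{-t}z)-\alpha)\bigr]^{1/\beta}$, and every step from the proof of Theorem \ref{t3} carries over. Analyticity of $\mathcal{L}(\cdot,t)$ on some $\mathcal{U}_{r}$, the normal-family bound for $\mathcal{L}(z,t)/a_{1}(t)$, and the construction of $p(z,t)$ all go through unchanged; the maximum-principle estimate at $u=e^{-t}e^{i\theta}$ now invokes \eqref{2.212} in place of \eqref{2.2} to bound $|\mathcal{H}(e^{i\theta},t)|$ by $(m+1)/2$, whence $|w(z,t)|<1$ on $\mathcal{U}$ and Theorem \ref{t1} yields the univalence of $\mathcal{F}_{\beta}=\mathcal{L}(\cdot,0)$.
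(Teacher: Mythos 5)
Your proof is correct and matches the paper's: the paper likewise obtains Corollary \ref{c33} as the specialization of Corollary \ref{c1} (i.e.\ Theorem \ref{t3} with $g=f'$) to $h\equiv 0$, and your verification that the $h$-dependent terms of \eqref{2.18} vanish to leave exactly \eqref{2.212}, with \eqref{2.211} identical to \eqref{2.17}, is precisely what that one-line reduction requires.
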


\begin{proof}
It results from Corollary \ref{c1} with $g=f^{\prime }$ and $h=0$.
\end{proof}

If we consider $g(z)=f^{\prime },$ $h(z)=-\frac{1}{2}\frac{f^{\prime \prime }%
}{f^{\prime }},$ $\alpha =0,$ $\beta =1$ in Theorem \ref{t3}, we obtain
another univalence criterion as follows.

\begin{corollary}
\label{c3*}\textit{Let }$m$\textit{\ be a positive real number }and$\;f\in 
\mathcal{A}.$\textit{\ If the following inequality}%
\begin{equation}
\left\vert \frac{z^{2}\left( 1-\left\vert z\right\vert ^{m+1}\right) ^{2}}{%
\left\vert z\right\vert ^{m+1}}\left( \frac{1}{2}\left\{ f;z\right\} \right)
-\frac{m-1}{2}\right\vert \leq \frac{m+1}{2}  \label{2.12*}
\end{equation}%
where%
\begin{equation*}
\left\{ f;z\right\} =\left( \frac{f^{\prime \prime }(z)}{f^{\prime }(z)}%
\right) ^{\prime }-\frac{1}{2}\left( \frac{f^{\prime \prime }(z)}{f^{\prime
}(z)}\right) ^{2}
\end{equation*}%
is true for all $z\in \mathcal{U},$ then the function $f(z)$ is analytic and
univalent\textit{\ in }$\mathcal{U},$\textit{\ where the principal branch is
intended.}
\end{corollary}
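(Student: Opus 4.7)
The plan is to derive this corollary as a direct specialization of Theorem \ref{t3} to the parameters $g=f'$, $h=-\tfrac{1}{2}\dfrac{f''}{f'}$, $\alpha=0$, and $\beta=1$. With $\beta=1$, the integral operator degenerates nicely: $\mathcal{F}_{1}(z)=\int_{0}^{z}f'(u)\,du=f(z)$, so univalence of $\mathcal{F}_{\beta}$ is precisely univalence of $f$. The first step is therefore to verify that hypotheses (\ref{2.22}) and (\ref{2.2}) of Theorem \ref{t3} collapse to (\ref{2.12*}) under these substitutions.

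For (\ref{2.22}), the choice $\alpha=0$, $g=f'$ makes the quotient $f'(z)/(g(z)-\alpha)$ identically $1$, so the inequality reduces to a pure constant bound on $m$ which is easily seen to hold under the standing assumption on $m$. For (\ref{2.2}), I would then show that the first two summands inside the absolute value cancel term by term. Indeed, $\frac{f'(z)}{g(z)-\alpha}-1=0$ kills the first line, and in the second line
\[
2z^{\beta}\frac{f'(z)h(z)}{g(z)-\alpha}+\frac{1}{\beta}\frac{zg'(z)}{g(z)-\alpha}=2z\!\left(-\tfrac{1}{2}\tfrac{f''}{f'}\right)+z\tfrac{f''}{f'}=0.
\]
Thus only the third bracketed expression contributes.

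The main computational step, and the only part requiring genuine care, is evaluating the surviving bracket. With $\beta=1$, $z^{\beta-1}=1$, so one gets
\[
\frac{f'(z)h(z)^{2}}{g(z)-\alpha}+\frac{1}{\beta}\!\left(\frac{g'(z)h(z)}{g(z)-\alpha}-h'(z)\right)=\frac{1}{4}\!\left(\frac{f''}{f'}\right)^{\!2}-\frac{1}{2}\!\left(\frac{f''}{f'}\right)^{\!2}+\frac{1}{2}\!\left(\frac{f''}{f'}\right)'.
\]
The first two terms combine to $-\tfrac{1}{4}(f''/f')^{2}$, so the whole bracket becomes
\[
\tfrac{1}{2}\!\left[\left(\tfrac{f''}{f'}\right)'-\tfrac{1}{2}\!\left(\tfrac{f''}{f'}\right)^{\!2}\right]=\tfrac{1}{2}\{f;z\}.
\]
This is the one place where a small sign or coefficient slip would invalidate the reduction, so this is where I would be most careful.

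Putting the pieces together, condition (\ref{2.2}) becomes exactly (\ref{2.12*}). Since both hypotheses of Theorem \ref{t3} hold and $\mathcal{F}_{1}=f$, Theorem \ref{t3} yields the analyticity and univalence of $f$ in $\mathcal{U}$, proving the corollary.
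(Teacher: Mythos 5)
Your proposal follows exactly the paper's route: the paper gives no written proof of Corollary \ref{c3*} beyond the sentence announcing the specialization $g=f'$, $h=-\tfrac{1}{2}f''/f'$, $\alpha=0$, $\beta=1$ of Theorem \ref{t3}, and your computation correctly fills in that reduction --- the first two blocks of (\ref{2.2}) vanish, the surviving bracket equals $\tfrac{1}{2}\{f;z\}$, and $\mathcal{F}_{1}=f$.

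The one point you pass over too quickly is the verification of (\ref{2.22}). With $f'(z)/(g(z)-\alpha)\equiv 1$ that hypothesis reads, as printed, $\left|1-\tfrac{m-1}{2}\right|<\tfrac{m+1}{2}$, i.e.\ $|3-m|<m+1$, which fails for every $0<m\le 1$ (at $m=1$ it is the false statement $1<1$); so it is not ``easily seen to hold'' under the standing assumption that $m$ is merely a positive real. The discrepancy traces back to the paper itself: equation (\ref{2.15}) in the proof of Theorem \ref{t3}, together with the parallel hypotheses (\ref{2.17}), (\ref{2.19}) and (\ref{2.26}), shows that the intended center in (\ref{2.22}) is $\tfrac{m+1}{2}$ rather than $\tfrac{m-1}{2}$; under that reading the condition becomes $|1-m|<1+m$, which does hold for all $m>0$. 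To make your argument airtight you should either invoke that corrected form of (\ref{2.22}) explicitly or restrict the corollary to $m>1$.
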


Setting $\alpha =0$ in Corollary \ref{c33} we have another univalence
criterion as follows.

\begin{corollary}
\label{c333}\textit{Let }$m$\textit{\ be a positive real number and let} $%
\beta $\textit{\ be complex number such that }$\Re \beta >0$ and$\;f\in 
\mathcal{A}.$ \textit{If the following inequality}%
\begin{equation}
\left\vert \frac{\left( 1-\left\vert z\right\vert ^{\beta \left( m+1\right)
}\right) }{\beta }\left( \frac{zf^{\prime \prime }(z)}{f^{\prime }(z)}%
\right) -\frac{m-1}{2}\right\vert \leq \frac{m+1}{2}  \label{2.210}
\end{equation}%
is true for all $z\in \mathcal{U},$ then the function $\mathcal{F}_{\beta
}(z)$ defined by (\ref{2.3}) is analytic and univalent\textit{\ in }$%
\mathcal{U},$\textit{\ where the principal branch is intended.}
\end{corollary}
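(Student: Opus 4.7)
The plan is to deduce Corollary \ref{c333} directly from Corollary \ref{c33} by specializing $\alpha = 0$. First I would substitute $\alpha = 0$ into the two hypotheses (\ref{2.211}) and (\ref{2.212}) of Corollary \ref{c33} and show that they reduce, respectively, to a trivial inequality and to the single hypothesis (\ref{2.210}) of the present statement.

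Step one: examine (\ref{2.211}) with $\alpha=0$. Since $f'(z)/f'(z)=1$, the left-hand side becomes $\left|1-(m+1)/2\right| = |1-m|/2$, and the required bound is $(m+1)/2$. For any positive real $m$, one checks the two cases $m\ge 1$ (giving $m-1<m+1$) and $0<m<1$ (giving $1-m<m+1$, i.e.\ $m>0$) separately, so the inequality holds automatically without any assumption on $f$. Hence (\ref{2.211}) is vacuous when $\alpha=0$.

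Step two: examine (\ref{2.212}) with $\alpha=0$. The first term $\left(\frac{f'(z)}{f'(z)-0}-1\right)|z|^{\beta(m+1)}=0$ drops out, and what remains is precisely
\[
\left|\left(1-|z|^{\beta(m+1)}\right)\left[\frac{1}{\beta}\,\frac{zf''(z)}{f'(z)}\right]-\frac{m-1}{2}\right|\leq \frac{m+1}{2},
\]
which is exactly the hypothesis (\ref{2.210}) of the corollary.

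Step three: apply Corollary \ref{c33} with $\alpha=0$. Its two hypotheses are satisfied (the first trivially, the second by assumption), and its conclusion is that $\mathcal{F}_{\beta}(z)$ defined by (\ref{2.3}) is analytic and univalent in $\mathcal{U}$, with the principal branch, which is the desired conclusion. There is no real obstacle here: the whole argument is a bookkeeping reduction to Corollary \ref{c33}, and the only point that needs verification is the elementary fact that $|1-m|<m+1$ for all $m>0$, which I would spell out in one line.
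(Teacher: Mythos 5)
Your proposal is correct and follows the same route as the paper, which derives Corollary \ref{c333} simply by the remark ``Setting $\alpha=0$ in Corollary \ref{c33}.'' Your added verification that hypothesis (\ref{2.211}) becomes the automatic inequality $|1-m|/2<(m+1)/2$ for $m>0$, while (\ref{2.212}) reduces exactly to (\ref{2.210}), is the correct bookkeeping that the paper leaves implicit.
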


\begin{corollary}
\label{334}\textit{Let }$m$\textit{\ be a positive real number and let} $%
\beta $\textit{\ be complex number with }$\Re \beta >0$ and$\;f\in \mathcal{A%
}.$ \textit{If the following inequality}%
\begin{equation}
\left\vert \frac{\left( 1-\left\vert z\right\vert ^{\left( m+1\right) \Re
\beta }\right) }{\Re \beta }\left( \frac{zf^{\prime \prime }(z)}{f^{\prime
}(z)}\right) \right\vert \leq 1  \label{2.212*}
\end{equation}%
is true for all $z\in \mathcal{U},$ then the function $\mathcal{F}_{\beta
}(z)$ defined by (\ref{2.3}) is analytic and univalent\textit{\ in }$%
\mathcal{U},$\textit{\ where the principal branch is intended.}
\end{corollary}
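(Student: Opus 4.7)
The plan is to reduce Corollary \ref{334} to Corollary \ref{c333}, which already yields univalence of $\mathcal{F}_{\beta}$ as soon as the quantity
\[
W(z):=\frac{1-|z|^{\beta(m+1)}}{\beta}\cdot\frac{zf''(z)}{f'(z)}
\]
lies, for every $z\in\mathcal{U}$, in the closed disk $D_m:=\{w\in\mathbb{C}:|w-\tfrac{m-1}{2}|\le\tfrac{m+1}{2}\}$. So it suffices to show that the hypothesis of Corollary \ref{334} forces this inclusion.

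The technical heart of the reduction is the comparison estimate
\[
\left|\frac{1-r^{\beta(m+1)}}{\beta}\right|\le \frac{1-r^{(\Re\beta)(m+1)}}{\Re\beta},\qquad r\in[0,1),\ \Re\beta>0,
\]
which I would establish by writing $r^{\beta(m+1)}=e^{\beta(m+1)\log r}$ and applying the identity $1-e^{s}=-s\int_0^1 e^{s\tau}\,d\tau$ with $s=\beta(m+1)\log r$ (so that $\Re s<0$). Moving the modulus inside the integral via $|e^{s\tau}|=e^{(\Re s)\tau}$ and evaluating in closed form produces precisely the right-hand side. Multiplying by $|zf''(z)/f'(z)|$ and invoking the hypothesis of Corollary \ref{334} then immediately gives $|W(z)|\le 1$.

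A final triangle inequality yields $|W(z)-\tfrac{m-1}{2}|\le 1+\tfrac{|m-1|}{2}$, which equals $\tfrac{m+1}{2}$ precisely when $m\ge 1$ — geometrically, this is the statement that the closed unit disk is contained in $D_m$ for $m\ge 1$. Hence the hypothesis of Corollary \ref{c333} is satisfied and the conclusion follows.

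The main obstacle is the comparison estimate itself; the remaining steps are routine bookkeeping. One caveat worth flagging: when $0<m<1$ the closed unit disk is not contained in $D_m$, so this particular reduction is not tight enough in that range, and covering those values of $m$ would require either a sharper geometric argument on the image of $W$ or an implicit strengthening of the hypothesis.
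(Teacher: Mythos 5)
Your proposal follows essentially the same route as the paper: the paper also reduces the statement to the earlier criterion via the comparison inequality $\left\vert \frac{1-|z|^{(m+1)\beta }}{\beta }\right\vert \leq \frac{1-|z|^{(m+1)\Re \beta }}{\Re \beta }$ (which it simply cites from Pascu rather than proving; your integral-representation argument for it is correct) and then applies the same triangle-inequality step $1+\frac{m-1}{2}=\frac{m+1}{2}$. Your caveat about $0<m<1$ is well taken and is in fact a defect of the paper itself: the statement allows any positive $m$, but the paper's own proof explicitly begins ``For $m\geq 1$, we have \dots'' and, exactly as you observe, the bound $1+\frac{|m-1|}{2}$ exceeds $\frac{m+1}{2}$ when $m<1$, so neither your argument nor the paper's covers that range.
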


\begin{proof}
It can be proved (see \cite{Pas}) that for $z\in \mathcal{U}\backslash
\left\{ 0\right\} ,$ $\Re \beta >0$ and $m\in 
%TCIMACRO{\U{211d} }%
%BeginExpansion
\mathbb{R}
%EndExpansion
_{+}$%
\begin{equation*}
\left\vert \frac{1-|z|^{(m+1)\beta }}{\beta }\right\vert \leq \frac{%
1-|z|^{(m+1)\Re \beta }}{\Re \beta }.
\end{equation*}%
For $m\geq 1$, we have%
\begin{eqnarray*}
&&\left\vert \frac{1-|z|^{(m+1)\beta }}{\beta }\left( \frac{zf^{\prime
\prime }(z)}{f^{\prime }(z)}\right) -\frac{m-1}{2}\right\vert \leq
\left\vert \frac{1-|z|^{(m+1)\beta }}{\beta }\left( \frac{zf^{\prime \prime
}(z)}{f^{\prime }(z)}\right) \right\vert +\frac{m-1}{2} \\
&\leq &\frac{1-|z|^{(m+1)\Re \beta }}{\Re \beta }\left\vert \frac{zf^{\prime
\prime }(z)}{f^{\prime }(z)}\right\vert +\frac{m-1}{2} \\
&\leq &1+\frac{m-1}{2}=\frac{m+1}{2}.
\end{eqnarray*}%
Since inequalities (\ref{2.22}) and (\ref{2.2}) are satisfied, making use of
Theorem \ref{t3}, we can conclude that the function $\mathcal{F}_{\beta }$
is analytic and univalent in $\mathcal{U}$.
\end{proof}

Putting $g(z)=\left( \frac{f(z)}{z}\right) ^{2},$ $h(z)=0,$ $\alpha =0,$ in
Theorem \ref{t3}, we get the univalence criterion as follows.

\begin{corollary}
\label{c34}\textit{Let }$m$\textit{\ be a positive real number and let} $%
\beta $\textit{\ be complex number such that }$\Re \beta >0$ and$\;f\in 
\mathcal{A}.$ \textit{If the following inequalities}%
\begin{equation}
\left\vert \frac{z^{2}f^{\prime }(z)}{f^{2}(z)}-\frac{m+1}{2}\right\vert <%
\frac{m+1}{2},  \label{2.213}
\end{equation}%
\textit{and}%
\begin{equation}
\left\vert \left( \frac{z^{2}f^{\prime }(z)}{f^{2}(z)}-1\right) \left\vert
z\right\vert ^{\beta (m+1)}+\frac{2\left( 1-\left\vert z\right\vert ^{\beta
\left( m+1\right) }\right) }{\beta }\left( \frac{zf^{\prime }(z)}{f(z)}%
-1\right) -\frac{m-1}{2}\right\vert \leq \frac{m+1}{2}  \label{2.214}
\end{equation}%
are true for all $z\in \mathcal{U},$ then the function $\mathcal{F}_{\beta
}(z)$ defined by (\ref{2.3}) is analytic and univalent\textit{\ in }$%
\mathcal{U},$\textit{\ where the principal branch is intended.}
\end{corollary}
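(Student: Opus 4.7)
The approach is to deduce Corollary \ref{c34} directly from Theorem \ref{t3} by specializing the auxiliary functions to $g(z)=(f(z)/z)^{2}$, $h(z)\equiv 0$, and $\alpha=0$. First I would verify that this choice is admissible. Since $f\in\mathcal{A}$ has expansion $f(z)=z+a_{2}z^{2}+\cdots$, the quotient $f(z)/z=1+a_{2}z+\cdots$ is analytic in $\mathcal{U}$ with value $1$ at the origin, so $g(z)=(f(z)/z)^{2}=1+2a_{2}z+\cdots$ is analytic with $g(0)=1$ and matches the normalization $g(z)=1+b_{1}z+\cdots$ required in Theorem \ref{t3}. Similarly $h\equiv 0$ trivially satisfies $h(z)=c_{0}+c_{1}z+\cdots$.

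Next I would perform the elementary identifications needed to transform conditions (\ref{2.22}) and (\ref{2.2}) into (\ref{2.213}) and (\ref{2.214}). With $\alpha=0$ and $g(z)=(f(z)/z)^{2}$, one immediately obtains
\[
\frac{f'(z)}{g(z)-\alpha}=\frac{f'(z)}{(f(z)/z)^{2}}=\frac{z^{2}f'(z)}{f^{2}(z)},
\]
so (\ref{2.22}) reduces to (\ref{2.213}) (upon the same convention used throughout the previous corollaries regarding the center of the disk). The one computation that requires a line of work is the logarithmic derivative: differentiating $g(z)=(f(z)/z)^{2}$ and simplifying gives
\[
\frac{zg'(z)}{g(z)-\alpha}=2\!\left(\frac{zf'(z)}{f(z)}-1\right).
\]
Since $h\equiv 0$, all four terms involving $h$ or $h'$ in (\ref{2.2}) vanish, leaving only the term with $\frac{1}{\beta}\frac{zg'(z)}{g(z)-\alpha}$, which by the identity above becomes $\frac{2}{\beta}\bigl(\tfrac{zf'(z)}{f(z)}-1\bigr)$. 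Substituting these expressions into (\ref{2.2}) yields exactly (\ref{2.214}).

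Finally, I would invoke Theorem \ref{t3}: since hypotheses (\ref{2.213}) and (\ref{2.214}) are, after this substitution, the hypotheses (\ref{2.22}) and (\ref{2.2}), the conclusion that $\mathcal{F}_{\beta}(z)$ defined by (\ref{2.3}) is analytic and univalent in $\mathcal{U}$ follows at once. The only mildly delicate part of the plan is the identity for $\frac{zg'(z)}{g(z)-\alpha}$; everything else is a direct bookkeeping of which terms survive the specializations $h\equiv 0$ and $\alpha=0$. No new analytic machinery is required beyond Theorem \ref{t3} itself.
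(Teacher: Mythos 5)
Your proposal is correct and coincides with the paper's own (one-line) proof: the paper likewise obtains Corollary \ref{c34} by putting $g(z)=\left(f(z)/z\right)^{2}$, $h(z)=0$, $\alpha=0$ in Theorem \ref{t3}, and your computation $\frac{zg'(z)}{g(z)}=2\left(\frac{zf'(z)}{f(z)}-1\right)$ together with the vanishing of all $h$-terms gives exactly (\ref{2.213}) and (\ref{2.214}). Your parenthetical remark about the center of the disk in the first condition correctly flags the paper's own inconsistency between (\ref{2.22}) and the form $\frac{m+1}{2}$ actually used in the proof of Theorem \ref{t3} and in all the corollaries.
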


\begin{corollary}
\label{c3}\textit{Let }$m$\textit{\ be a positive real number }and$\;f\in 
\mathcal{A}.$\textit{\ If the following inequality}%
\begin{eqnarray}
&&\left\vert z\left( 1-\left\vert z\right\vert ^{m+1}\right) \left(
2f^{\prime \prime }(z)+\frac{f^{\prime \prime }(z)}{f^{\prime }(z)}\right)
\right.  \notag \\
&&\left. +\frac{z^{2}\left( 1-\left\vert z\right\vert ^{m+1}\right) ^{2}}{%
\left\vert z\right\vert ^{m+1}}\left( \frac{\left( f^{\prime \prime
}(z)\right) ^{2}}{f^{\prime }(z)}+\left( f^{\prime \prime }(z)\right)
^{2}-f^{\prime \prime \prime }(z)\right) -\frac{m-1}{2}\right\vert  \notag \\
&\leq &\frac{m+1}{2}  \label{2.21}
\end{eqnarray}%
is true for all $z\in \mathcal{U},$ then the function $f(z)$ is analytic and
univalent\textit{\ in }$\mathcal{U},$\textit{\ where the principal branch is
intended.}
\end{corollary}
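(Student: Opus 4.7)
The plan is to derive Corollary \ref{c3} as the special case of Corollary \ref{c2} obtained by setting $\alpha=0$ and $\beta=1$. For these parameter choices the integral operator in (\ref{2.3}) specializes to $\mathcal{F}_{1}(z)=\int_{0}^{z}f'(u)\,du=f(z)$, so the conclusion of Corollary \ref{c2} that $\mathcal{F}_{1}$ is analytic and univalent in $\mathcal{U}$ is precisely the univalence of $f$ asserted in Corollary \ref{c3}.

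First I would verify that hypothesis (\ref{2.19}) is automatic under $\alpha=0$. Indeed, $\frac{f'(z)}{f'(z)-\alpha}=1$, so (\ref{2.19}) reduces to $\bigl|1-\tfrac{m+1}{2}\bigr|<\tfrac{m+1}{2}$, i.e.\ $|1-m|<m+1$, and this inequality holds for every positive real $m$. Thus no additional restriction on $f$ is required from (\ref{2.19}).

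The main step is to substitute $\alpha=0$ and $\beta=1$ into the left-hand side of (\ref{2.20}) and check, term by term, that it reduces exactly to the left-hand side of (\ref{2.21}). The first summand $\bigl(\tfrac{f'(z)}{f'(z)-\alpha}-1\bigr)|z|^{\beta(m+1)}$ vanishes identically. The middle summand becomes $z(1-|z|^{m+1})\bigl[\tfrac{f''(z)}{f'(z)}(2f'(z)+1)\bigr]=z(1-|z|^{m+1})\bigl(2f''(z)+\tfrac{f''(z)}{f'(z)}\bigr)$. The final summand becomes $\tfrac{z^{2}(1-|z|^{m+1})^{2}}{|z|^{m+1}}\bigl[\tfrac{(f''(z))^{2}}{f'(z)}(f'(z)+1)-f'''(z)\bigr]=\tfrac{z^{2}(1-|z|^{m+1})^{2}}{|z|^{m+1}}\bigl(\tfrac{(f''(z))^{2}}{f'(z)}+(f''(z))^{2}-f'''(z)\bigr)$. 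Adding these three summands and subtracting $\tfrac{m-1}{2}$ reproduces exactly the expression whose modulus is bounded in (\ref{2.21}).

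There is no serious obstacle; the argument is a direct substitution check, and the only thing to watch is the careful expansion of the product $\tfrac{(f''(z))^{2}}{f'(z)}(f'(z)+1)$ in the last bracket, which is precisely what makes the hybrid term $(f''(z))^{2}+\tfrac{(f''(z))^{2}}{f'(z)}$ appear in (\ref{2.21}). Once (\ref{2.19}) and (\ref{2.20}) have been verified under $\alpha=0$, $\beta=1$, Corollary \ref{c2} applies directly and yields that $\mathcal{F}_{1}=f$ is analytic and univalent in $\mathcal{U}$, completing the proof.
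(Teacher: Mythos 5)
Your proposal is correct and is exactly the paper's own argument: the paper proves Corollary \ref{c3} by the one-line observation that it follows from Corollary \ref{c2} with $\alpha=0$, $\beta=1$. You have simply spelled out the substitution check (that (\ref{2.19}) becomes vacuous, that (\ref{2.20}) reduces term by term to (\ref{2.21}), and that $\mathcal{F}_{1}=f$), which the paper leaves implicit.
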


\begin{proof}
It results from Corollary \ref{c2} with $\alpha =0,$ $\beta =1.$
\end{proof}

\begin{remark}
\label{r1}(1) Putting $g(z)=f^{\prime }(z),$ $h(z)=0,$ $\alpha =0,$ $\beta
=m=1$ in Theorem \ref{t3}, we have Becker's criterion \cite{Bec}.

(2) If we consider $g(z)=f^{\prime }(z),$ $h(z)=-\frac{1}{2}\frac{f^{\prime
\prime }(z)}{f^{\prime }(z)},$ $\alpha =0,$ $\beta =m=1$ in Theorem \ref{t3}%
, we obtain the univalence criterion due to Nehari \cite{Neh}.

(3) Setting $g(z)=\left( \frac{f(z)}{z}\right) ^{2},$ $h(z)=\frac{1}{z}-%
\frac{f(z)}{z^{2}},$ $\alpha =0,$ $\beta =m=1$ in Theorem \ref{t3}, we get
the univalence criterion due to Ozaki-Nunokawa \cite{Oz-Nu}.

(4) For $g(z)=f^{\prime }(z),$ $h(z)=\frac{1}{z}-\frac{f(z)}{f(z)},$ $\alpha
=0,$ $\beta =m=1$ in Theorem \ref{t3}, we arrive at Goluzin's criterion for
univalence \cite{Gol}.

(5) For $m=1$ in Corollary \ref{334}, we obtain the univalence criterion due
to Pascu \cite{Pas}.

(6) If we consider $g(z)=f^{\prime }(z),$ $h(z)=0,$ $\beta =1$ in Theorem %
\ref{t3}, we have results of Raducanu et al. \cite{Radu3}.

(7) Putting $\alpha =0,$ $\beta =m=1$ in Theorem \ref{t3}, we get the
univalence criterion due to Ovesea-Tudor and Owa \cite{Ove}.
\end{remark}

\begin{example}
\label{e1}Let the function%
\begin{equation}
f(z)=\frac{z}{1-\frac{z^{2}}{2}}.  \label{2.25}
\end{equation}

Then $f$ is univalent in $\mathcal{U}$ and the function%
\begin{equation}
\mathcal{F}_{2}(z)=\left( 2\dint\limits_{0}^{z}uf^{\prime }(u)du\right) ^{%
\frac{1}{2}}  \label{2.255}
\end{equation}%
is analytic and univalent in $\mathcal{U}$.
\end{example}

\begin{proof}
From equality (\ref{2.213}) for $m=1,$ we have%
\begin{equation}
\frac{z^{2}f^{\prime }(z)}{f^{2}(z)}-1=\frac{z^{2}}{2}.  \label{2.25*}
\end{equation}

It is clear that the condition (\ref{2.213}) of the Corollary \ref{c34} is
satisfied for $m=1,$ and then the function $f$ is univalent in $\mathcal{U}. 
$

Taking into account (\ref{2.25*}), the condition (\ref{2.214}) of Corollary %
\ref{c34} becomes for $\beta =2,$ $m=1,$%
\begin{eqnarray*}
\left\vert \frac{z^{2}}{2}\left\vert z\right\vert ^{4}+\left( 1-\left\vert
z\right\vert ^{4}\right) \frac{2z^{2}}{2-z^{2}}\right\vert &\leq &\frac{%
\left\vert z\right\vert ^{6}}{2}+2\left( 1-\left\vert z\right\vert
^{4}\right) \left\vert z\right\vert ^{2} \\
&=&\frac{1}{2}\left( 4\left\vert z\right\vert ^{2}-3\left\vert z\right\vert
^{6}\right) <1
\end{eqnarray*}%
because the greatest value of the function $g(x)=4x^{2}-3x^{6},$ for $x\in %
\left[ 0,1\right] $ is taken for $x=\sqrt{\frac{2}{3}}$ and $g(\sqrt{\frac{2%
}{3}})=\frac{24}{27}.$ Therefore the function $\mathcal{F}_{2}(z)$ defined
by (\ref{2.255}) is analytic and univalent in $\mathcal{U}$.%
\begin{equation*}
\FRAME{itbpFU}{3.288in}{2.3307in}{0in}{\Qcb{Figure 1: $f(z)=\frac{z}{1-\frac{%
z^{2}}{2}}$}}{}{Figure}{\special{language "Scientific Word";type
"GRAPHIC";display "USEDEF";valid_file "T";width 3.288in;height
2.3307in;depth 0in;original-width 3.7395in;original-height 2.0626in;cropleft
"0";croptop "1";cropright "1";cropbottom "0";tempfilename
'M1HBSF00.wmf';tempfile-properties "XPR";}}\text{ \ \ \ \ \ \ }\FRAME{itbpFU%
}{2.5408in}{2.5313in}{0in}{\Qcb{Figure 2: $\mathcal{F}_{2}(z)=\left( 4%
\protect\dint\limits_{0}^{z}\frac{2+u^{2}}{\left( 2-u^{2}\right) ^{2}}%
du\right) ^{\frac{1}{2}}$}}{}{Figure}{\special{language "Scientific
Word";type "GRAPHIC";display "USEDEF";valid_file "T";width 2.5408in;height
2.5313in;depth 0in;original-width 2.4163in;original-height 4.4996in;cropleft
"0";croptop "1";cropright "1";cropbottom "0";tempfilename
'M1HBIC02.wmf';tempfile-properties "XPR";}}
\end{equation*}
\end{proof}

\section{Quasiconformal Extension Criterion}

In this section we will obtain the univalence condition given in Theorem \ref%
{t3} to a quasiconformal extension criterion.

\begin{theorem}
\label{t4}\textit{Let }$m$\textit{\ be a positive real number and let }$%
\alpha ,$ $\beta $\textit{\ be complex numbers such that }$\Re \alpha <1/2,$ 
$\Re \beta >0$\textit{,}$\;f\in \mathcal{A}$ and $k\in \lbrack 0,1).$\textit{%
\ Let} $g$\textit{\ and }$h$\textit{\ be two analytic functions in }$%
\mathcal{U},$ $g(z)=1+b_{1}z+...,$ $h(z)=c_{0}+c_{1}z+...$\textit{. If the
following inequalities}%
\begin{equation}
\left\vert \frac{f^{\prime }(z)}{g(z)-\alpha }-\frac{m+1}{2}\right\vert <k%
\frac{m+1}{2},  \label{2.26}
\end{equation}%
\textit{and}%
\begin{equation*}
\left\vert \left( \frac{f^{\prime }(z)}{g(z)-\alpha }-1\right) \left\vert
z\right\vert ^{\beta (m+1)}\right.
\end{equation*}%
\begin{equation*}
+\left( 1-\left\vert z\right\vert ^{\beta \left( m+1\right) }\right) \left[
2z^{\beta }\frac{f^{\prime }(z)h(z)}{g(z)-\alpha }+\frac{1}{\beta }\frac{%
zg^{\prime }(z)}{g(z)-\alpha }\right]
\end{equation*}%
\begin{equation}
\left. +\frac{z^{\beta +1}\left( 1-\left\vert z\right\vert ^{\beta \left(
m+1\right) }\right) ^{2}}{\left\vert z\right\vert ^{\beta \left( m+1\right) }%
}\left[ \frac{z^{\beta -1}f^{\prime }(z)h^{2}(z)}{g(z)-\alpha }+\frac{1}{%
\beta }\left( \frac{g^{\prime }(z)h(z)}{g(z)-\alpha }-h^{\prime }(z)\right) %
\right] -\frac{m-1}{2}\right\vert \leq k\frac{m+1}{2}  \label{2.27}
\end{equation}%
is true for all $z\in \mathcal{U},$ then the function $\mathcal{F}_{\beta
}(z)$ given by (\ref{2.3}) has a $k-$quasiconformal extension to $%
%TCIMACRO{\U{2102} }%
%BeginExpansion
\mathbb{C}
%EndExpansion
.$
\end{theorem}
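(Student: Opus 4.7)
The plan is to recycle the proof of Theorem \ref{t3} almost verbatim and then apply Becker's criterion (Theorem \ref{t2}) in place of Pommerenke's theorem (Theorem \ref{t1}). Concretely, I would define the same Loewner chain $\mathcal{L}(z,t)$ as in \eqref{2.4}, and carry out the identical analysis of the auxiliary functions $\varphi_1,\dots,\varphi_5$: none of the analyticity, local absolute continuity, or normal-family steps depend on whether the bound in the main hypothesis is $\frac{m+1}{2}$ or $k\frac{m+1}{2}$, so they transfer without change. This yields, on some disk $\mathcal{U}_{r_0}$, the same associated function
\begin{equation*}
p(z,t)=z\,\frac{\partial \mathcal{L}(z,t)/\partial z}{\partial \mathcal{L}(z,t)/\partial t}
\end{equation*}
and hence the same expression \eqref{2.12} for $\mathcal{G}(z,t)$ via $w(z,t) = \frac{2}{m+1}\mathcal{G}(z,t) - \frac{m-1}{m+1}$.

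Next I would establish, in analogy with \eqref{2.13}, the sharper bound
\begin{equation*}
\left|\mathcal{H}(z,t)\right| = \left|\mathcal{G}(z,t)-\tfrac{m-1}{2}\right| \le k\,\tfrac{m+1}{2},\qquad z\in\mathcal{U},\ t\ge 0,
\end{equation*}
which is equivalent to $|w(z,t)|\le k$, i.e. $p(z,t)\in\mathcal{U}(k)$. For $t=0$ the exponential factors $e^{-\beta(m+1)t}$ collapse to $1$ and all other terms in \eqref{2.12} disappear, so $\mathcal{H}(z,0) = \frac{f'(z)}{g(z)-\alpha}-\frac{m+1}{2}$, and the desired bound is precisely \eqref{2.26}. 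For $t>0$, the function $\mathcal{H}(\cdot,t)$ extends analytically to $\overline{\mathcal{U}}$ because $|e^{-t}z|\le e^{-t}<1$, so by the maximum principle it suffices to bound $|\mathcal{H}(e^{i\theta},t)|$ on the unit circle. Substituting $u=e^{-t}e^{i\theta}\in\mathcal{U}$ converts $e^{-\beta(m+1)t}$ into $|u|^{\beta(m+1)}$, $e^{-t}z$ into $u$, and $e^{-\beta t}z^{\beta}$ into $u^{\beta}$, reducing the required inequality exactly to hypothesis \eqref{2.27} evaluated at $u$.

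Combining the two cases gives $p(z,t)\in\mathcal{U}(k)$ for all $z\in\mathcal{U}$ and $t\ge 0$. Since $\mathcal{L}(z,t)$ satisfies the chain equation \eqref{1.1} with this $p(z,t)$ and already meets the hypotheses of Theorem \ref{t1} (as verified in the proof of Theorem \ref{t3}), Theorem \ref{t2} applies: $\mathcal{L}(z,0)$ extends continuously to $\overline{\mathcal{U}}$ and the map
\begin{equation*}
F(z,\overline{z})=\begin{cases}\mathcal{L}(z,0),&|z|<1,\\[2pt] \mathcal{L}\!\left(\dfrac{z}{|z|},\log|z|\right),&|z|\ge 1,\end{cases}
\end{equation*}
is a $k$-quasiconformal extension of $\mathcal{L}(z,0)=\mathcal{F}_{\beta}(z)$ to $\mathbb{C}$, which is the conclusion.

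The main obstacle is purely bookkeeping: verifying that the algebraic identity turning \eqref{2.12} at $u=e^{-t}e^{i\theta}$ into the left-hand side of \eqref{2.27} is correct with the factors of $|z|^{\beta(m+1)}$, $z^{\beta+1}$, and $h^{\prime}$, and that the inequalities in \eqref{2.26}–\eqref{2.27} are sharp enough (non-strict on $|z|=1$, strict inside) to give $|w(z,t)|\le k$ rather than only $|w(z,t)|<1$. No new analytic ideas are needed beyond those already developed for Theorem \ref{t3}.
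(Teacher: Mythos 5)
Your proposal is correct and follows essentially the same route as the paper: reuse the Loewner chain $\mathcal{L}(z,t)$ and the function $p(z,t)$ from the proof of Theorem \ref{t3}, show $\left|\frac{p(z,t)-1}{p(z,t)+1}\right|\leq k$ from \eqref{2.26} and \eqref{2.27}, and invoke Theorem \ref{t2}. In fact you are more explicit than the paper, which merely asserts the bound $\leq k$ from \eqref{2.27} without repeating the $t=0$ case and the maximum-principle reduction to $u=e^{-t}e^{i\theta}$ that you correctly spell out.
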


\begin{proof}
Set%
\begin{equation}
\mathcal{L}(z,t)=\left[ \beta \dint\limits_{0}^{e^{-t}z}u^{\beta
-1}f^{\prime }(u)du+\frac{\left( e^{\beta mt}-e^{-\beta t}\right) z^{\beta
}\left( g\left( e^{-t}z\right) -\alpha \right) }{1+\left( e^{\beta
mt}-e^{-\beta t}\right) z^{\beta }h\left( e^{-t}z\right) }\right] ^{1/\beta }
\label{3.19}
\end{equation}%
In the proof of Theorem \ref{t3} has been shown that the function $\mathcal{L%
}(z,t)$ given by (\ref{3.19}) is a subordination chain in $\mathcal{U}.$
Then we have%
\begin{eqnarray}
\left\vert \frac{p(z,t)-1}{p(z,t)+1}\right\vert &=&\left\vert \frac{2}{m+1}%
\left\{ e^{-\beta \left( m+1\right) t}\left( \frac{f^{\prime }(e^{-t}z)}{%
g(e^{-t}z)-\alpha }-1\right) \right. \right.  \notag \\
&&+\left( 1-e^{-\beta \left( m+1\right) t}\right) \left[ 2e^{-\beta
t}z^{\beta }\frac{f^{\prime }(e^{-t}z)h(e^{-t}z)}{g(e^{-t}z)-\alpha }+\frac{%
e^{-t}z}{\beta }\frac{g^{\prime }(e^{-t}z)}{g(e^{-t}z)-\alpha }\right] 
\notag \\
&&+\frac{e^{-\beta t}z^{\beta }\left( 1-e^{-\beta \left( m+1\right)
t}\right) ^{2}}{e^{-\beta \left( m+1\right) t}}  \notag \\
&&\left. \left. \times \left[ e^{-\beta t}z^{\beta }\frac{f^{\prime
}(e^{-t}z)h^{2}(e^{-t}z)}{g(e^{-t}z)-\alpha }+\frac{e^{-t}z}{\beta }\left( 
\frac{h(e^{-t}z)g^{\prime }(e^{-t}z)}{g(e^{-t}z)-\alpha }-h^{\prime
}(e^{-t}z)\right) \right] \right\} -\frac{m-1}{m+1}\right\vert  \notag \\
&\leq &k.  \label{3.20}
\end{eqnarray}%
The right hand of (\ref{3.20}) always less than or equal to $k$ from (\ref%
{2.27}) and therefore $\mathcal{F}_{\beta }$ can be extended to $k$
quasiconformal mapping to ${\mathbb{C}}$ by Theorem \ref{t1} and Theorem \ref%
{t2}.
\end{proof}

\noindent

\end{document}